\DeclareMathOperator{\coloneqq}{%
   \mathrel{\mathop:}=\,}
\newcommand{\scfrac}[2]{%
   {\textstyle\frac{\displaystyle\raisebox{0.210ex}{\(#1\)}}{\displaystyle\raisebox{-0.710ex}{\(#2\)}}}%
}
\theoremstyle{plain}
\newtheorem{thm}{Theorem}[section]
\theoremstyle{definition}
\newtheorem{defn}{Definition}[section]
\theoremstyle{remark}
\newtheorem*{rem1}{Remark~1}
\newtheorem*{rem2}{Remark~2}
\begin{document}
%
\title[A PARABOLIC CYLINDER FUNCTION IN THE RIEMANN-SIEGEL INTEGRAL FORMULA]
      {A PARABOLIC CYLINDER FUNCTION\\IN THE RIEMANN-SIEGEL INTEGRAL FORMULA}
\author[WOLFGANG GABCKE]{WOLFGANG GABCKE}
\address{Wolfgang Gabcke, Göttingen, Germany}
\email{wolfgang@gabcke.de}

\subjclass[2010]{Primary 11M06; Secondary 33E30, 44A20}
\keywords{Riemann zeta function, Riemann-Siegel integral formula, parabolic cylinder function,
          functional equation of a Mordell integral}
\date{3rd of December 2015; 1st release: Error messages (even linguistic) and suggestions are ap\-pre\-ci\-at\-ed}

\begin{abstract}
We show that the two integrals in the Riemann-Siegel integral formula can be transformed into integral
representations that contain the parabolic cylinder function \(U(a,z)\) as kernel function.
\end{abstract}
%
%
\maketitle
%
\deffootnotemark{\textsuperscript{(\thefootnotemark)}}
\deffootnote{2em}{1.6em}{(\thefootnotemark)\enskip}
\setcounter{tocdepth}{1}
\vspace{-3mm}
\tableofcontents
\vspace{-5mm}
%
%
%
%
%
%
\section{Introduction}
\noindent
The Riemann-Siegel integral formula, first published by C.\ L.\ Siegel in 1932 (see \cite{Sie1, Sie2}), is the
symmetric integral representation
   \begin{gather*}
      \pi^{-s/2}\,\Gamma{\Bigl(\scfrac{s}{2}\Bigr)}\,\zeta{(s)}
      =\mathcal{F}(s)+\overline{\mathcal{F}(1-\overline{s})}
   \end{gather*}
of Riemann's zeta function where \(s\in\mathbb{C}\) and
   \begin{gather*}
      \mathcal{F}(s)\coloneqq\pi^{-s/2}\,\Gamma{\Bigl(\scfrac{s}{2}\Bigr)}\!\!\int\limits_{0 \swarrow 1}\!\!
      \scfrac{e^{i\pi u^2}u^{-s}}{e^{i\pi u}-e^{-i\pi u}}\,du.
   \end{gather*}
We will show that the function \(\mathcal{F}(s)\) can be written in the form
   \begin{gather}
   \label{m0015}
      \mathcal{F}(s)=2^{s/2}\,\Gamma{\Bigl(\scfrac{s}{2}\Bigr)}\,e^{-i\pi(1-s)/4}\!\!
      \int\limits_{\!\!\!\!\!\!\!{\scriptstyle-\frac{1}{2}}
      \searrow{\scriptstyle\frac{1}{2}}}\!\!\scfrac{e^{-i\pi u^2\!/2+i\pi u}}{2i\,\cos{\pi u}}\,
      U\!\left(s-1/2,\sqrt{2\pi}\,e^{i\pi/4}\,u\right)du,
   \end{gather}
i.\ e., with an integral that contains the parabolic cylinder function \(U(a,z)\) as kernel function.

In section \ref{parabolic_cylinder_function} we present a real integral representation of \(U(a,z)\) which is used
in section \ref{transformation_of_RSF}. There we will see that the integral representation \eqref{m0015} is the
consequence of the transformation of a special Mordell integral. The latter is proved in the appendix.

We will use a notation of Siegel from \cite{Sie1, Sie2} here:
\textit{Whenever there is an integration path with arrow,
the path is a straight line of slope 1 or -1 from \(\infty\) to \(\infty\) in the direction of arrow
crossing the real axis between the two points specified.}
%
%
%
%
%
%
\section{The Parabolic Cylinder Function}
\label{parabolic_cylinder_function}
\noindent
The parabolic cylinder function \(U(a,z)\) can be defined by the contour integral
   \begin{gather}
   \label{m0001}
      U(a,z)\coloneqq\scfrac{\Gamma{(1/2-a)}}{2\pi i}\,\,e^{-z^2\!/4}\!
              \int\limits_{\!\!\!C}\!e^{-w^2\!/2+zw}\,w^{a-1/2}\,dw\rlap{\(\qquad(a,\,z \in \mathbb{C})\),}
   \end{gather}
where the path \(C\) comes from \(\infty\) in the third quadrant parallel to the real axis, circles the origin once
in the positive direction and returns to \(\infty\) in the second quadrant parallel to the real axis
again.\footnote{See \cite[19.5.1]{Abr}.} \(U(a,z)\) is an entire function of both variables and a solution of the
differential equation \(y''-(z^2\!/4+a)\,y=0\). It satisfies a lot of recurrence relations such as\footnote{See
\cite[19.6.4]{Abr}.}
   \begin{gather*}
      z\,U(a,z)-U(a-1,z)+(a+1/2)\,U(a+1,z)=0.
   \end{gather*}
We will show that the two integrals in the Riemann-Siegel integral formula\footnote{To this formula see
\cite[ch.\ 7]{Edw1, Edw2}, \cite{Sie1, Sie2} or \cite[ch.\ 4, 4.1.1]{Gab}} can be transformed into integrals that
contain \(U(a,z)\) as kernel function. For this we need
%
\begin{thm}[Real integral representation]
\label{real_integral_representation}
If \(a,\,z\in\mathbb{C}\) and \(\Re{(a)}>-1/2\), we have
   \begin{gather*}
      U(a,z)=\scfrac{e^{-z^2\!/4}}{\Gamma{(1/2+a)}}\int\limits_{\!\!0}^{\,\,\infty}\!
             e^{-w^2\!/2-zw}\,w^{a-1/2}\,dw.
   \end{gather*}
\end{thm}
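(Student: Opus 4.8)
The plan is to start from the contour-integral definition \eqref{m0001} and collapse the Hankel path $C$ onto the branch cut of $w^{a-1/2}$, which runs along the negative real axis. Concretely, I would deform $C$ --- without crossing the cut --- into three pieces: a ray approaching the origin from $-\infty$ just below the cut, a small circle of radius $\delta$ around the origin, and a ray leaving the origin toward $-\infty$ just above the cut. This deformation is legitimate because the integrand is holomorphic off the negative real axis, and on any horizontal line $w = x \pm i\delta$ one has $\Re(w^2) = x^2 - \delta^2 \to +\infty$, so the Gaussian factor $e^{-w^2/2}$ forces absolute convergence and dominates the at-most-exponential growth of $e^{zw}$ for every $z\in\mathbb{C}$.

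Next I would evaluate the two rays by the substitution $w = -u$ with $u>0$. Using the principal branch with cut on the negative axis, the lower ray carries $\arg w = -\pi$, hence $w^{a-1/2} = u^{a-1/2}e^{-i\pi(a-1/2)}$, while the upper ray carries $\arg w = +\pi$, hence $w^{a-1/2} = u^{a-1/2}e^{+i\pi(a-1/2)}$; in both cases $w^2 = u^2$ and $zw = -zu$. Adding the two contributions (traversed with opposite orientations) produces the common real integral $\int_0^\infty e^{-u^2/2-zu}\,u^{a-1/2}\,du$ multiplied by the factor $e^{-i\pi(a-1/2)} - e^{i\pi(a-1/2)} = -2i\sin\!\bigl(\pi(a-1/2)\bigr) = 2i\cos(\pi a)$.

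I would then dispose of the circular piece: on $|w| = \delta$ the modulus of the integrand is $O\bigl(\delta^{\Re(a)-1/2}\bigr)$ and the arc length is $O(\delta)$, so its contribution is $O\bigl(\delta^{\Re(a)+1/2}\bigr)$, which tends to $0$ precisely because the hypothesis $\Re(a) > -1/2$ makes the exponent positive. This is the one place where that hypothesis is essential, and it is also exactly what guarantees convergence of the real integral at its lower endpoint.

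Finally, inserting the factor $2i\cos(\pi a)$ into \eqref{m0001} gives the prefactor $\frac{\Gamma(1/2-a)\cos(\pi a)}{\pi}$, and the reflection formula $\Gamma(1/2-a)\,\Gamma(1/2+a) = \pi/\sin\!\bigl(\pi(1/2+a)\bigr) = \pi/\cos(\pi a)$ collapses this to $1/\Gamma(1/2+a)$, yielding the claimed representation. I expect the main obstacle to be the careful bookkeeping of the branch of $w^{a-1/2}$ on the two sides of the cut, together with the justification that the horizontal-line contour may be pushed onto the axis; once the orientation and argument conventions are fixed consistently, the remaining trigonometric and Gamma-function identities are routine.
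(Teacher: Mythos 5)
Your argument is correct and follows essentially the same route as the paper: collapse the Hankel contour of \eqref{m0001} onto the two sides of the cut along the negative real axis, pick up the factor \(e^{-i\pi(a-1/2)}-e^{i\pi(a-1/2)}=2i\cos(\pi a)\), and finish with the reflection formula \(\Gamma(1/2-a)\,\Gamma(1/2+a)=\pi/\cos(\pi a)\). Your treatment is in fact slightly more complete than the paper's, since you explicitly verify that the small circle around the origin contributes \(O\bigl(\delta^{\Re(a)+1/2}\bigr)\to 0\) under the hypothesis \(\Re(a)>-1/2\), a point the paper passes over in silence.
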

%
\begin{proof}
\allowdisplaybreaks{
Let \(\Re{(a)}>-1/2\) in \eqref{m0001}. Then we can put the integration path on the real axis and through the
origin giving
   \begin{gather*}
      \int\limits_{\!\!\!C}\!e^{-w^2\!/2+zw}\,w^{a-1/2}\,dw\\*
         =\int\limits_{\!\!\!-\infty}^{\,\,0}\!e^{-w^2\!/2+zw+(a-1/2)[\log{(-w)}-i\pi]}\,dw
          +\!\!\!\!\int\limits_{\!\!0}^{\,\,-\infty}\!\!\!e^{-w^2\!/2+zw+(a-1/2)[\log{(-w)}+i\pi]}\,dw\\
      \begin{aligned}
         &=\Bigl[e^{-i\pi(a-1/2)}-e^{i\pi(a-1/2)}\Bigr]
           \int\limits_{\!\!0}^{\,\,\infty}\!e^{-w^2\!/2-zw}\,w^{a-1/2}\,dw\\*
         &=2i\sin{\pi\Bigl(\scfrac{1}{2}-a\Bigr)}\!
           \int\limits_{\!\!0}^{\,\,\infty}\!e^{-w^2\!/2-zw}\,w^{a-1/2}\,dw.
      \end{aligned}
   \end{gather*}
Therefore, the theorem follows from \eqref{m0001} by using the reflection formula of the gamma function.
}
\end{proof}
%
%
%
%
%
%
%
\section{Transformation of the Riemann-Siegel Integral Formula}
\label{transformation_of_RSF}
\noindent
Setting \(\tau = 1\) and \(x=z+1/2\) in the transformation formula of the Mordell integral \eqref{m0013},
we obtain
   \begin{gather}
   \label{m0002}
      \int\limits_{0 \nearrow 1}\!\!\!\scfrac{e^{i\pi u^2+2\pi izu}}{e^{i\pi u}-e^{-i\pi u}}\,du=\!\!
      \int\limits_{0 \nwarrow 1}\!\!\scfrac{e^{-i\pi u^2+i\pi u}}{e^{-2\pi iu}-1}\,
      e^{-i\pi z^2+2\pi i\,(u-1/2)\,z}\,du.
   \end{gather}
We multiply this equation by \(z^{s-1}\) with \(s\) real \(>1\) and integrate over \(z\) along the bisecting
line of the second quadrant. This yields for the left side of \eqref{m0002}\footnote{The resulting double
integrals converge absolutely so that the integration order can be interchanged. See also theorem
\cite[4.1.6]{Gab}.}
   \begin{align*}
      \int\limits_{\!\!0}^{e^{\frac{3\pi i}{4}}\infty}\!\!\!\!\!
      \int\limits_{0 \nearrow 1}\!\!\scfrac{e^{i\pi u^2+2\pi izu}\,z^{s-1}}{e^{i\pi u}-e^{-i\pi u}}\,du\,dz
      &=\!\!\int\limits_{0 \nearrow 1}\!\!\scfrac{e^{i\pi u^2}}{e^{i\pi u}-e^{-i\pi u}}\!\!\!
      \int\limits_{\!\!0}^{e^{\frac{3\pi i}{4}}\infty}\!\!\!\!\!e^{2\pi izu}z^{s-1}\,dz\,du\\*
      &=e^{i\pi s/2}\,(2\pi)^{-s}\,\Gamma{(s)}\!\!
        \int\limits_{0 \nearrow 1}\!\!\!\scfrac{e^{i\pi u^2}u^{-s}}{e^{i\pi u}-e^{-i\pi u}}\,du
   \end{align*}
and for its right side
   \begin{gather*}
      \int\limits_{\!\!0}^{e^{\frac{3\pi i}{4}}\infty}\!\!\!
      \int\limits_{0 \nwarrow 1}\!\!\scfrac{e^{-i\pi u^2+i\pi u}}{e^{-2\pi iu}-1}\,
      e^{-i\pi z^2+2\pi i\,(u-1/2)\,z}\,z^{s-1}\,du\,dz\\*
         =\!\!\int\limits_{0 \nwarrow 1}\!\!\scfrac{e^{-i\pi u^2+i\pi u}}{e^{-2\pi iu}-1}\!\!
           \int\limits_{\!\!0}^{e^{\frac{3\pi i}{4}}\infty}\!\!\!\!\!
           e^{-i\pi z^2+2\pi i\,(u-1/2)\,z}\,z^{s-1}\,dz\,du\\*[1.0ex]
         =(2\pi)^{-s/2}\,\Gamma{(s)}\,e^{3i\pi s/4+i\pi/8}\!\!\int\limits_{0 \nwarrow 1}\!\!
           \scfrac{e^{-i\pi u^2\!/2+i\pi u/2}}{e^{-2\pi iu}-1}\,U\!\left(s-1/2,\sqrt{2\pi}\,
           e^{i\pi/4}\,(u-1/2)\right)du,
   \end{gather*}
where we have used the formula
   \begin{gather*}
      \int\limits_{\!\!0}^{e^{\frac{3\pi i}{4}}\infty}\!\!\!\!\!
      e^{-i\pi z^2+2\pi i\,(u-1/2)\,z}\,z^{s-1}\,dz\\*
      =(2\pi)^{-s/2}\,\Gamma{(s)}\,e^{3i\pi s/4+i\pi (u-1/2)^2\!/2}\,
      U\!\left(s-1/2,\sqrt{2\pi}\,e^{i\pi/4}\,(u-1/2)\right),
   \end{gather*}
which follows from theorem \ref{real_integral_representation} if we substitute there
\(we^{3\pi i/4}\!/\!\sqrt{2\pi}\) for \(z\) in the integral. With these transformations from \eqref{m0002}
   \begin{gather*}
      \int\limits_{0 \nearrow 1}\!\!\!\scfrac{e^{i\pi u^2}u^{-s}}{e^{i\pi u}-e^{-i\pi u}}\,du\\
      =(2\pi)^{s/2}\,e^{i\pi s/4+i\pi/8}\!\!\int\limits_{0 \nwarrow 1}\!\!
       \scfrac{e^{-i\pi u^2\!/2+i\pi u/2}}{e^{-2\pi iu}-1}\,U\!\left(s-1/2,\sqrt{2\pi}\,
       e^{i\pi/4}\,(u-1/2)\right)du
   \end{gather*}
follows. Finally, if we substitute \(u+1/2\) for \(u\) in the last integral, we have
   \begin{gather*}
      \int\limits_{0 \nearrow 1}\!\!\!\scfrac{e^{i\pi u^2}u^{-s}}{e^{i\pi u}-e^{-i\pi u}}\,du\\
      =(2\pi)^{s/2}\,e^{i\pi(s+1)/4}\!\!\int\limits_{\!\!\!\!\!\!\!{\scriptstyle-\frac{1}{2}}
      \searrow{\scriptstyle\frac{1}{2}}}\!\!\scfrac{e^{-i\pi u^2\!/2}}{e^{-2\pi iu}+1}\,
      U\!\left(s-1/2,\sqrt{2\pi}\,e^{i\pi/4}\,u\right)du
   \end{gather*}
and so
   \begin{gather}
      \notag \mathcal{F}(s)\coloneqq\pi^{-s/2}\,\Gamma{\Bigl(\scfrac{s}{2}\Bigr)}\!\!
      \int\limits_{0 \swarrow 1}\!\!
      \scfrac{e^{i\pi u^2}u^{-s}}{e^{i\pi u}-e^{-i\pi u}}\,du\\[-1.5ex]
   \label{m0005}\phantom{a}\\[-1.5ex]
      \notag =2^{s/2}\,\Gamma{\Bigl(\scfrac{s}{2}\Bigr)}\,e^{-i\pi(1-s)/4}\!\!
      \int\limits_{\!\!\!\!\!\!\!{\scriptstyle-\frac{1}{2}}
      \searrow{\scriptstyle\frac{1}{2}}}\!\!\scfrac{e^{-i\pi u^2\!/2+i\pi u}}{2i\,\cos{\pi u}}\,
      U\!\left(s-1/2,\sqrt{2\pi}\,e^{i\pi/4}\,u\right)\,du
   \end{gather}
as well as
   \begin{gather*}
      \overline{\mathcal{F}(1-\overline{s})}=\pi^{-(1-s)/2}\,\Gamma{\Bigl(\scfrac{1-s}{2}\Bigr)}\!\!
      \int\limits_{0 \searrow 1}\!\!\scfrac{e^{-i\pi u^2}u^{s-1}}{e^{i\pi u}-e^{-i\pi u}}\,du\\
      =2^{(1-s)/2}\,\Gamma{\Bigl(\scfrac{1-s}{2}\Bigr)}\,e^{i\pi s/4}\!\!
      \int\limits_{\!\!\!\!\!\!\!{\scriptstyle-\frac{1}{2}}
      \swarrow{\scriptstyle\frac{1}{2}}}\!\!\scfrac{e^{i\pi u^2\!/2-i\pi u}}{2i\,\cos{\pi u}}\,
      U\!\left(1/2-s,\sqrt{2\pi}\,e^{-i\pi/4}\,u\right)du,
   \end{gather*}
now valid for all \(s\) by analytic continuation. So the Riemann-Siegel integral formula
   \begin{gather*}
      \pi^{-s/2}\,\Gamma{\Bigl(\scfrac{s}{2}\Bigr)}\,\zeta{(s)}
      =\mathcal{F}(s)+\overline{\mathcal{F}(1-\overline{s})}
   \end{gather*}
yields an integral representation that contains the parabolic cylinder function \(U\) as integral
kernel.

It should be noted that the cosine function on the right side of \eqref{m0005} is essential for the integral to
converge in the second quadrant.
%
%
%
%
%
%
\section{Appendix}
\label{appendix}
\noindent
We prove some theorems used above that are hard to find in the literature.
\begin{defn}[Mordell integral]
\label{def_mordell_integral}
For \(x,\,\tau \in \mathbb{C}\) and \(\Re{(\tau)>0}\) let
   \begin{gather}
   \label{m0010}
      \Phi{(x,\tau)}\coloneqq\!\!\int\limits_{0 \nearrow 1}\!\!
      \scfrac{e^{i\pi \tau u^2+2\pi ixu}}{e^{2\pi iu}-1}\,du
   \end{gather}
be a special Mordell integral\footnote{For definition and properties of Mordell integrals and their connection to
theta functions see \cite{Mor} and for some applications \cite[ch.\ 4.1]{Gab}.}.
The integration path is a straight line of slope \(1\) from the lower to the upper halfplane crossing the real
axis between \(0\) and \(1\).
\end{defn}
\noindent
\(\Phi{(x,\tau)}\) is an entire function of \(x\) because the integral converges regardless of the value of
\(x\).

If \(\tau\) is a positive rational number, a representation of \(\Phi{(x,\tau)}\) using only exponential functions
is given by
%
\begin{thm}
\label{phi_with_rational_tau}
Let \(m,\,n\) be natural numbers and \(\tau=m/n\). Then
   \begin{gather}
   \label{m0011}
      \Phi{\Bigl(x,\scfrac{m}{n}\Bigr)}=
      \scfrac{\displaystyle\sum\limits_{k=1}^{n}e^{i\pi{\textstyle\frac{m}{n}}k^2+2k\pi ix}-
      \sqrt{\scfrac{n}{m}}\,e^{i\pi\left({\textstyle\frac{1}{4}}-{\textstyle\frac{n}{m}}x^2\right)}
      \displaystyle\sum\limits_{k=1}^{m}
      e^{-i\pi{\textstyle\frac{n}{m}}k^2+2k\pi i{\textstyle\frac{n}{m}}x}}{\displaystyle e^{i\pi n(2x+m)}-1}.
      \;\raisebox{1.2ex}{\footnotemark}
   \end{gather}
\footnotetext{Since \(\Phi{(x,\tau)}\) is an entire function of \(x\), every zero of the denominator must be also
a zero of the numerator. From that we obtain interesting relations between exponential sums and an elegant proof
of the quadratic reciprocity law (see \cite[ch.\ V]{Cha1} or \cite[p.\ 35 ff.]{Cha2}).}
\end{thm}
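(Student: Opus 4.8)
\emph{Strategy.} The plan is to determine the entire function \(\Phi(x,m/n)\) by producing two independent relations for it — one from Cauchy's theorem and one from a Gaussian (Fresnel) evaluation — and then solving for \(\Phi\). Throughout, the rationality \(\tau=m/n\) is exactly what collapses the otherwise infinite theta-type data into the two finite exponential sums appearing in \eqref{m0011}.

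\emph{First relation, by shifting the path.} The integrand in \eqref{m0010} has simple poles at the integers, with \(\Res_{u=k}=e^{i\pi\frac{m}{n}k^{2}+2\pi ikx}/(2\pi i)\). I would translate the path \(0\nearrow 1\) to the parallel path \(n\nearrow n+1\) and close the two slope-\(1\) lines by the short segments near \(\pm e^{i\pi/4}\infty\) (on which the Gaussian factor \(e^{i\pi\frac{m}{n}u^{2}}\) forces decay, exactly as it does to make \(\Phi\) converge), so that Cauchy's theorem collects the residues at \(u=1,\dots,n\). Substituting \(u\mapsto u+n\) in the shifted integral and using \(\tfrac{m}{n}(u+n)^{2}=\tfrac{m}{n}u^{2}+2mu+mn\) identifies that integral as \(e^{i\pi mn+2\pi inx}\,\Phi(x+m,m/n)\). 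This yields
\[
\Phi(x,m/n)=e^{i\pi mn+2\pi inx}\,\Phi(x+m,m/n)-\sum_{k=1}^{n}e^{i\pi\frac{m}{n}k^{2}+2\pi ikx},
\]
which already exhibits the denominator \(e^{i\pi n(2x+m)}-1\) and the first numerator sum of \eqref{m0011}.

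\emph{Second relation, by a Fresnel evaluation.} I would complete the square, \(i\pi\tfrac{m}{n}u^{2}+2\pi ixu=i\pi\tfrac{m}{n}\bigl(u+\tfrac{n}{m}x\bigr)^{2}-i\pi\tfrac{n}{m}x^{2}\), and then rescale by \(u=\sqrt{n/m}\,e^{i\pi/4}w-\tfrac{n}{m}x\). This turns the Gaussian into \(e^{-\pi w^{2}}\) and factors out precisely \(\sqrt{n/m}\,e^{i\pi/4}e^{-i\pi\frac{n}{m}x^{2}}=\sqrt{n/m}\,e^{i\pi(\frac14-\frac{n}{m}x^{2})}\), the prefactor of the second sum in \eqref{m0011}. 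The price is that the simple poles, formerly at the integers, now lie on a line of slope \(-1\); deforming the rescaled path onto the real axis (the steepest-descent line of \(e^{-\pi w^{2}}\)) crosses a set of these poles whose residues, because \(\tau=m/n\) is rational, assemble into the finite companion sum \(\sum_{k=1}^{m}e^{-i\pi\frac{n}{m}k^{2}+2\pi i\frac{n}{m}kx}\). The residual Gaussian integral reproduces a Mordell integral of parameter \(-n/m\), so that this step gives a second relation which, solved simultaneously with the functional equation above, produces \eqref{m0011}.

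\emph{Main obstacle.} The difficulty will be bookkeeping rather than ideas: getting every phase and branch right — the value \(e^{i\pi/4}\) and the branch of \(\sqrt{n/m}\) coming from the Fresnel normalization along the tilted line, the \(e^{i\pi mn}\) parity factors, and the exact count and location of the residues crossed during the deformation. Alongside this I must justify the contour manipulations: absolute convergence on the tilted lines, the vanishing of the connecting segments at infinity, and the admissibility of deforming across the tilted line of poles. Once these are tracked consistently the two relations determine \(\Phi(x,m/n)\) uniquely and reproduce the stated closed form; the fact that \(\Phi\) is entire — so every zero of the denominator in \eqref{m0011} must cancel against the numerator — serves as a built-in consistency check on the final phases.
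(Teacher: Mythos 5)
The paper does not actually prove this theorem---it defers to Chandrasekharan---so the comparison is with the classical argument given there, which runs on two quasi-periodicity relations. Your first relation is exactly one of them and is correctly set up: shifting the path from \(0\nearrow 1\) to \(n\nearrow n+1\) collects the residues at \(u=1,\dots,n\), and the substitution \(u\mapsto u+n\) identifies the shifted integral as \(e^{i\pi mn+2\pi inx}\,\Phi(x+m,m/n)\); this accounts for the denominator \(e^{i\pi n(2x+m)}-1\) and the first sum in \eqref{m0011}.

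The gap is your second relation. The step that actually closes the system is the elementary shift \(x\mapsto x+1\): since \(\tfrac{e^{2\pi iu}}{e^{2\pi iu}-1}-\tfrac{1}{e^{2\pi iu}-1}=1\), one has
\[
\Phi(x+1,\tau)-\Phi(x,\tau)=\int\limits_{0\nearrow 1}e^{i\pi\tau u^2+2\pi ixu}\,du
=\tau^{-1/2}\,e^{i\pi/4}\,e^{-i\pi x^2/\tau},
\]
a complete Gaussian with no poles, evaluated by exactly the Fresnel rotation you describe. Iterating this \(m\) times gives \(\Phi(x+m,m/n)-\Phi(x,m/n)=\sqrt{n/m}\,e^{i\pi/4}\sum_{j=0}^{m-1}e^{-i\pi\frac{n}{m}(x+j)^2}\), and the index change \(j=m-k\) turns this into \(\sqrt{n/m}\,e^{i\pi(1/4-\frac{n}{m}x^2)}\,e^{-i\pi nm-2\pi inx}\sum_{k=1}^{m}e^{-i\pi\frac{n}{m}k^2+2\pi i\frac{n}{m}kx}\); substituting into your first relation yields \eqref{m0011} at once. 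Your proposed substitute---completing the square inside the full integral, denominator included, and deforming the rotated path onto the real axis---does not produce this. The poles in the \(w\)-plane lie on a line of direction \(e^{-i\pi/4}\), and the horizontal strip swept by your deformation meets that line in a segment whose pole count depends on \(x\) and on the heights of the two paths, not intrinsically on \(m\), so the companion sum \(\sum_{k=1}^{m}\) does not emerge from those residues. Moreover the leftover integral is not a Mordell integral of parameter \(-n/m\) in any usable sense (its quadratic factor has become \(e^{-\pi w^2}\) while the denominator has acquired an incommensurable period), so it is a genuinely new unknown and your two relations cannot be ``solved simultaneously'' for \(\Phi(x,m/n)\). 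Replacing the second step by the \(x\mapsto x+1\) relation above repairs the proof; the rest of your argument, including the convergence and contour-closing justifications you flag, is sound.
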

%
\noindent
For a proof of this fundamental theorem see \cite[ch.\(\,\)V]{Cha1} or \cite[p.\ 35 ff.]{Cha2}.
%
\begin{thm}[Functional equation of the Mordell integral]
\label{functional_equation_mordell_integral}
Let \(x,\tau \in \mathbb{C}\) and \(\Re{(\tau)>0}\). Then the functional equation
   \begin{gather}
   \label{m0012}
      \Phi{(x,\tau)}=-\scfrac{e^{i\pi (1/4-x^2\!/\tau)}}{\sqrt{\tau}}\,
      \overline{\Phi{\Bigl(\!-\scfrac{\overline{x}}{\overline{\tau}},\scfrac{1}{\overline{\tau}}\Bigr)}}\\
   \intertext{and the integral transformation formula}
   \label{m0013}
      \int\limits_{0 \nearrow 1}\!\!\scfrac{e^{i\pi \tau u^2+2\pi ixu}}{e^{2\pi iu}-1}\,du
      =\scfrac{e^{i\pi (1/4-x^2\!/\tau)}}{\sqrt{\tau}}\!\!
      \int\limits_{0 \nwarrow 1}\!\!\scfrac{e^{-i\pi u^2\!/\tau+2\pi ixu/\tau}}{e^{-2\pi iu}-1}\,du
   \end{gather}
are valid.
\end{thm}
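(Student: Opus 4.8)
The plan is to prove the functional equation \eqref{m0012} first and then to read off the integral transformation formula \eqref{m0013} from it, since the two become equivalent once the defining integral \eqref{m0010} is inserted. For \eqref{m0012} the strategy is to verify it first for rational $\tau$, where the closed form of Theorem \ref{phi_with_rational_tau} is at hand, and then to pass to arbitrary $\tau$ with $\Re(\tau)>0$ by analytic continuation.

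First I would fix $x\in\mathbb{C}$ and a rational $\tau=m/n$, so that $\overline{\tau}=\tau$ and $1/\overline{\tau}=n/m$ is again rational, and substitute \eqref{m0011} for both $\Phi(x,m/n)$ and $\Phi(-\overline{x}\,n/m,\,n/m)$. Conjugating the latter replaces $i$ by $-i$ and $x$ by $\overline{x}$ wherever they occur with real coefficients; multiplying by the prefactor $-e^{i\pi(1/4-x^{2}n/m)}/\sqrt{m/n}$, I expect the two Gauss-type sums to swap roles, the sum over $k=1,\dots,m$ coming from $\overline{\Phi}$ reproducing the second sum in the numerator of $\Phi(x,m/n)$ and the sum over $k=1,\dots,n$ reproducing the first, with the scalar factors $\sqrt{n/m}\,\sqrt{m/n}$, $e^{\pm i\pi/4}$ and $e^{\mp i\pi x^{2}n/m}$ cancelling exactly. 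For the denominators one computes $\overline{e^{i\pi m(2(-\overline{x}n/m)+n)}-1}=e^{i\pi n(2x-m)}-1$, which equals $e^{i\pi n(2x+m)}-1$ because $e^{-2\pi imn}=1$ for $m,n\in\mathbb{N}$; this is the crucial cancellation that makes the two closed forms coincide. Hence \eqref{m0012} holds for every positive rational $\tau$ and every $x\in\mathbb{C}$.

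Next I would extend to all $\tau$ with $\Re(\tau)>0$ by the identity theorem. For fixed $x$ the left side $\Phi(x,\tau)$ is holomorphic in $\tau$ on the right half plane, and the right side is holomorphic there as well: the prefactor $-e^{i\pi(1/4-x^{2}/\tau)}/\sqrt{\tau}$ is holomorphic on the principal branch (here $\Re(\tau)>0$ keeps $\tau$ off the cut), while the factor $\overline{\Phi(-\overline{x}/\overline{\tau},\,1/\overline{\tau})}$ has the Schwarz-reflection form $\overline{F(\overline{\tau})}$ with $F(\sigma):=\Phi(-\overline{x}/\sigma,\,1/\sigma)$ holomorphic in $\sigma$ on $\Re(\sigma)>0$, so that $\tau\mapsto\overline{F(\overline{\tau})}$ is again holomorphic. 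Since the positive rationals accumulate at every point of the positive real axis, which lies inside the domain, agreement on them forces agreement throughout the half plane, yielding \eqref{m0012} in full.

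Finally I would deduce \eqref{m0013}. Writing the left side of \eqref{m0012} as the defining integral \eqref{m0010} and the factor $\overline{\Phi(-\overline{x}/\overline{\tau},1/\overline{\tau})}$ as the conjugate of the integral over $0\nearrow1$, I conjugate the latter by reflecting the integration variable, $u\mapsto\overline{u}$. The integrand $e^{i\pi u^{2}/\overline{\tau}-2\pi i\overline{x}u/\overline{\tau}}/(e^{2\pi iu}-1)$ becomes $e^{-i\pi u^{2}/\tau+2\pi ixu/\tau}/(e^{-2\pi iu}-1)$, exactly the kernel required on the right of \eqref{m0013}, while the slope-$1$ path $0\nearrow1$ is carried to the slope-$(-1)$ path but with reversed orientation relative to $0\nwarrow1$, so that the induced sign converts the $-$ of \eqref{m0012} into the $+$ of \eqref{m0013}. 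I expect the main obstacle to be the rational-case bookkeeping, namely correctly pairing the two exponential sums after conjugation and confirming the denominator identity through $e^{-2\pi imn}=1$, together with the care needed to justify holomorphy of the conjugated term for the continuation and to track the contour orientation in the last step.
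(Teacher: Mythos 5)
Your proof is correct, and it uses the same two main ingredients as the paper --- the closed form \eqref{m0011} for rational \(\tau\) with the cancellation \(e^{i\pi n(2x-m)}=e^{i\pi n(2x+m)}\), and the contour-conjugation computation \(\overline{\Phi(-\overline{x}/\overline{\tau},1/\overline{\tau})}=-\int_{0\nwarrow 1}e^{-i\pi u^2\!/\tau+2\pi ixu/\tau}(e^{-2\pi iu}-1)^{-1}\,du\) --- but it organizes the analytic continuation differently, and in a way the paper explicitly declines to attempt. The paper passes from rational to positive real \(\tau\) by density, then refuses to continue the conjugate form in \(\tau\) directly (``complex conjugation is not a holomorphic operation''), and instead first converts to the conjugation-free integral identity \eqref{m0013}, continues \emph{that} to \(\Re(\tau)>0\), and only then recovers \eqref{m0012}. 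You continue \eqref{m0012} itself, observing that its right-hand side is of Schwarz-reflection type \(\overline{F(\overline{\tau})}\) with \(F(\sigma)=\Phi(-\overline{x}/\sigma,1/\sigma)\) holomorphic on \(\Re(\sigma)>0\), hence holomorphic in \(\tau\); this is a valid and slightly slicker route, and it clarifies that the paper's caveat really applies only to the intermediate form \eqref{m0014}, where \(\tau\) rather than \(\overline{\tau}\) appears inside \(\overline{\Phi}\) and the right-hand side genuinely is antiholomorphic --- a distinction invisible for real \(\tau\) but decisive off the real axis. Your deduction of \eqref{m0013} from \eqref{m0012} is then the same orientation bookkeeping the paper uses in the opposite direction (\(\overline{\int_{0\nearrow 1}}=\int_{0\searrow 1}=-\int_{0\nwarrow 1}\) applied to the reflected integrand), so the two arguments are logically equivalent; what yours buys is economy (no detour through the real-\(\tau\) case of \eqref{m0013}), at the price of having to justify the Schwarz-reflection holomorphy claim carefully, including fixing the principal branch of \(\sqrt{\tau}\) so that it matches the positive root on the rationals where the identity was verified.
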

%
\begin{proof}
We exchange \(n\) and \(m\) in \eqref{m0011}. Then we replace \(x\) by \(-n\overline{x}/m\) and obtain
   \begin{gather*}
      \Phi{\Bigl(\!-\scfrac{n\overline{x}}{m},\scfrac{n}{m}\Bigr)}=
      \scfrac{\displaystyle\sum\limits_{k=1}^{m}
      e^{i\pi{\textstyle\frac{n}{m}}k^2-2k\pi i{\textstyle\frac{n}{m}}\overline{x}}-
      \sqrt{\scfrac{m}{n}}\,e^{i\pi\left({\textstyle\frac{1}{4}}-{\textstyle\frac{n}{m}}\overline{x}^2\right)}
      \displaystyle\sum\limits_{k=1}^{n}
      e^{-i\pi{\textstyle\frac{m}{n}}k^2-2k\pi i\overline{x}}}{\displaystyle e^{i\pi m(-2n\overline{x}/m+n)}-1}.
   \end{gather*}
Transition to the complex conjugated value yields
   \begin{gather*}
      \overline{\Phi{\Bigl(\!-\scfrac{n\overline{x}}{m},\scfrac{n}{m}\Bigr)}}=
      \scfrac{\displaystyle\sum\limits_{k=1}^{m}
      e^{-i\pi{\textstyle\frac{n}{m}}k^2+2k\pi i{\textstyle\frac{n}{m}}x}-
      \sqrt{\scfrac{m}{n}}\,e^{-i\pi\left({\textstyle\frac{1}{4}}-{\textstyle\frac{n}{m}}x^2\right)}
      \displaystyle\sum\limits_{k=1}^{n}
      e^{i\pi{\textstyle\frac{m}{n}}k^2+2k\pi ix}}{\displaystyle e^{i\pi n(2x-m)}-1}.
   \end{gather*}
We multiply this equation by \(-\sqrt{n/m}\,e^{i\pi(1/4-nx^2\!/m)}\) and regain the right side of \eqref{m0011} on
the right side here since \(e^{i\pi n(2x-m)}=e^{i\pi n(2x+m)}\). Thus we have proved
   \begin{gather}
   \label{m0014}
      \Phi{(x,\tau)}=-\scfrac{e^{i\pi (1/4-x^2\!/\tau)}}{\sqrt{\tau}}\,
      \overline{\Phi{\Bigl(\!-\scfrac{\overline{x}}{\tau},\scfrac{1}{\tau}\Bigr)}}
   \end{gather}
if \(\tau=m/n\) is any positive rational number and \(x\in\mathbb{C}\). But since the rational numbers are dense in
\(\mathbb{R}\), this equation must hold for all positive real numbers. Unfortunately, we cannot generalise this
result to complex \(\tau\) directly because the complex conjugation is not a holomorphic operation.
We use another way and replace \(\tau\) by \(1/\tau\) and \(x\) by \(-\overline{x}/\tau\) in definition
\eqref{m0010} giving
   \begin{gather*}
      \Phi{\Bigl(\!-\scfrac{\overline{x}}{\tau},\scfrac{1}{\tau}\Bigr)}
      =\!\!\int\limits_{0 \nearrow 1}\!\!\scfrac{e^{i\pi u^2\!/\tau-2\pi i\overline{x}u/\tau}}{e^{2\pi iu}-1}\,du
   \end{gather*}
and therefore,
   \begin{gather*}
      \overline{\Phi{\Bigl(\!-\scfrac{\overline{x}}{\tau},\scfrac{1}{\tau}\Bigr)}}
      =-\!\!\!\int\limits_{0 \nwarrow 1}\!\!\scfrac{e^{-i\pi u^2\!/\tau+2\pi ixu/\tau}}{e^{-2\pi iu}-1}\,du.
   \end{gather*}
We put this in \eqref{m0014} and under use of \eqref{m0010} we obtain the integral transformation formula
\eqref{m0013} valid for positive real \(\tau\) and \(x\in\mathbb{C}\). Obviously, both sides of \eqref{m0013} are
analytic functions of \(\tau\) if \(\Re{(\tau)}>0\). So this equation holds for these \(\tau\) by analytic
continuation as required. Finally, the functional equation \eqref{m0012} follows from \eqref{m0013} in combination
with \eqref{m0010} because of
   \begin{gather*}
      \int\limits_{0 \nwarrow 1}\!\!\scfrac{e^{-i\pi u^2\!/\tau+2\pi ixu/\tau}}{e^{-2\pi iu}-1}\,du
      =\overline{\!\!\int\limits_{0 \swarrow 1}\!\!
       \scfrac{e^{i\pi u^2\!/\overline{\tau}-2\pi i\overline{x}u/\overline{\tau}}}{e^{2\pi iu}-1}\,du}
      =-\overline{\Phi{\Bigl(\!-\scfrac{\overline{x}}{\overline{\tau}},\scfrac{1}{\overline{\tau}}\Bigr)}}.
   \end{gather*}
\end{proof}
%
\begin{rem1}\textit{\(\!\!\!\)
The functional equation \eqref{m0012} differs only marginally from that of the Jacobian theta function
\(\sum_{n=-\infty}^{+\infty}e^{i\pi\tau n^2+2n\pi ix}\:\,(\Im{(\tau)>0})\). Indeed, Mordell has proved the
strong connection between an integral like \eqref{m0010} and the theta functions in \cite{Mor}.}
\end{rem1}
\begin{rem2}\textit{\(\!\!\!\)
The integral transformation \eqref{m0013} can be used in the case of small \(|\tau|\) to con\-vert a slowly
convergent integral into a rapidly convergent one.}
\end{rem2}
%
%
%
%
%
%


\begin{thebibliography}{1}
\bibitem{Abr} \textsc{M.\(\,\,\)Abramowitz, I.\(\,\,\)A.\(\,\,\)Stegun},
\textit{Handbook of Mathematical Functions}, National Bureau of Standards,
Applied Mathematics Series, \textbf{55}, Tenth Printing, 1972,
\url{http://www.cs.bham.ac.uk/~aps/research/projects/as/book.php}
%
\medskip
\bibitem{Cha1} \textsc{K.\(\,\,\)Chandrasekharan},
\textit{Ein\-füh\-rung in die ana\-ly\-ti\-sche Zah\-len\-the\-orie}, Lecture Notes in Mathematics \textbf{29},
Springer Verlag, Berlin, Heidelberg, New York, 1966
\bibitem{Cha2} \raisebox{1mm}{\line(1,0){30}},
\textit{Introduction to Analytic Number Theory}, Springer Verlag, Berlin, Heidelberg, New York, 1968
%
\medskip
\bibitem{Edw1} \textsc{H.\(\,\,\)M.\(\,\,\)Edwards}, \textit{Riemann's Zeta Function}, Academic Press,
New York, 1974
%
\bibitem{Edw2} \raisebox{1mm}{\line(1,0){30}}, \raisebox{1mm}{\line(1,0){30}}, reprint,
Dover Publications, New York, 2001
%
\medskip
\bibitem{Gab} \textsc{W.\(\,\,\)Gabcke},
\textit{Neue Herleitung und explizite Restabschätzung der
Riemann-Siegel-Formel}, Ph.D.\ thesis, Georg-August-Universität zu Göttingen, 1979,
\url{http://hdl.handle.net/11858/00-1735-0000-0022-6013-8}
%
\medskip
\bibitem{Mor} \textsc{L.\(\,\,\)J.\(\,\,\)Mordell}, \textit{The definite integral
\(\int\limits_{\!-\infty}^{\infty}\scfrac{e^{ax^2+bx}}{e^{cx}+d}\,dx\) and the analytic theory of numbers},
Acta Mathematica \textbf{61}, 1933, 323--360,
\url{http://www.springerlink.com/content/j1474815272w103h/fulltext.pdf}
%
\medskip
\bibitem{Sie1} \textsc{C.\(\,\,\)L.\(\,\,\)Siegel},
\textit{Über Riemanns Nach\-laß zur analytischen Zahlentheorie}, Quellenstudien zur Geschichte der Mathematik,
Astronomie und Physik, \textbf{Abt.\ B, Studien 2}, 1932, 45--80
%
\bibitem{Sie2} \raisebox{1mm}{\line(1,0){30}},
\textit{Gesammelte Abhandlungen}, \textbf{1}, 275--310,
Springer Verlag, Berlin, Heidelberg, New York, 1966
\end{thebibliography}
\end{document}